\begin{document}
\newtheorem{thm}{Theorem}
\newtheorem{cor}[thm]{Corollary}
\newtheorem{lem}{Lemma}
\theoremstyle{remark}\newtheorem{rem}{Remark}
\theoremstyle{definition}\newtheorem{defn}{Definition}

\title{Variaiton and $\lambda$-jump inequalities on $H^p$ spaces}
\author{Sakin Demir}
%\date{March 10, 2016}
\author{Sakin Demir\\
Agri Ibrahim Cecen University\\ 
Faculty of Education\\
Department of Basic Education\\
04100 A\u{g}r{\i}, Turkey\\
E-mail: sakin.demir@gmail.com
}

%\date{}

\maketitle

%% Classification and key words; note that the 2010 classification is used:

\renewcommand{\thefootnote}{}

\footnote{2020 \emph{Mathematics Subject Classification}: Primary 42B25 ; Secondary 42B30.}

\footnote{\emph{Key words and phrases}: Hardy space, variation operator, $\lambda$-jump operator.}

\renewcommand{\thefootnote}{\arabic{footnote}}
\setcounter{footnote}{0}
\maketitle
\begin{abstract}Let $\phi\in \mathscr{S}$ with $\int\phi (x)\, dx=1$, and define
$$\phi_t(x)=\frac{1}{t^n}\phi (\frac{x}{t}),$$
and denote the function family $\{\phi_t\ast f(x)\}_{t>0}$ by $\Phi\ast f(x)$. Suppose that there exists a constant $C_1$ such that
$$\sum_{t>0} |\hat{\phi}_t(x)|^2<C_1$$
for all $x\in \mathbb{R}^n$. Then
\begin{enumerate} [(i)]
\item There exists a constant $C_2>0$ such that
$$\|\mathscr{V}_2(\Phi\ast f)\|_{L^p}\leq C_2\|f\|_{H^p},\;\;\frac{n}{n+1}<p\leq 1$$
for all $f\in H^p(\mathbb{R}^n)$, $\frac{n}{n+1}<p\leq 1$.
\item The $\lambda$-jump operator $N_{\lambda}(\Phi\ast f)$ satisfies
$$\|\lambda [N_{\lambda}(\Phi\ast f)]^{1/2}\|_{L^p}\leq C_3\|f\|_{H^p},\;\;\frac{n}{n+1}<p\leq 1,$$
uniformly in $\lambda >0$ for some constant $C_3>0$.
\end{enumerate}
\end{abstract}

%%%%%%%%%%%%%%%%%%%%%

Variation, oscillation and $\lambda$-jump inequalities on $L^p$ spaces have long been the research subjects of many mathematicians in probability, harmonic analysis and ergodic theory (see \cite{lpgle}, \cite{sdemir}, \cite{jbrgn}, \cite{rjkm}, and \cite{jsw}).\\
 When $0<p\leq 1$ the Hardy space $H^p$ does no longer behave like $L^p$, that's why proving variation and oscilation inequalities on $H^p$ spaces for $0<p\leq 1$ requires a completely different work than what one does when working on an $L^p$ space.\\ 
It has been proved in the  author's Ph.D thesis (see \cite{sdem}) that several operators including the ergodic square function of differences of averages over lacunary sequences map  ergodic $H^1$  to $L^1$.\\
Let $\phi\in \mathscr{S}$ with $\int\phi (x)\, dx=1$, and define
$$\phi_t(x)=\frac{1}{t^n}\phi (\frac{x}{t}),$$
and denote the function family $\{\phi_t\ast f(x)\}_{t>0}$ by $\Phi\ast f(x)$.
Let $0<p<\infty$. A distribution $f$ belongs to $H^p(\mathbb{R}^n)$ if the maximal function
$$Mf(x)=\sup_{t>0}|(\phi_t\ast f(x)|$$
is in $L^p$.\\
\begin{defn} Let $\mathcal{J}$ be a subset of $\mathbb{R}$ (or more generally an ordered index set). We consider real or complex valued functions $t\mapsto a_t$ defined on $\mathcal{J}$ and define their $\rho$-variation as
$$\|a\|_{v_{\rho}}=\sup\left(\sum_{i\geq 1} |a_{t_{i+1}}-a_{t_i}|^{\rho}\right)^{1/\rho},$$
where the supremum runs over all finite decreasing sequences $(t_i)$ in $\mathcal{J}$.
\end{defn}
We can define 
\begin{align*}
\mathscr{V}_{\rho}(\Phi\ast f)(x)&=\|\{\phi_t\ast f(x)\}_{t>0}\|_{v_\rho}\\
&=\sup\left(\sum_{i\geq 1} |\phi_{t_{i+1}}\ast f(x)-\phi_{t_i}\ast f(x)|^{\rho}\right)^{1/\rho}
\end{align*}
for any $x\in\mathbb{R}^n$.\\
Let $F=\{F_t:t\in \mathcal{J}\}$ be a family of Lebesgue measurable functions defined on $\mathbb{R}^n$.\\

The $\lambda$-jump function $N_{\lambda}(F)$ is defined  as the supremum of all integers $N$ for which there is an increasing sequence $0<s_1<t_1\leq s_2<t_2<\cdots \leq s_N<t_N$ such that
$$|F_{t_k}(x)-F_{s_k}(x)|>\lambda $$
for each $k=1,2,3,\dots ,N$.\\
H. Liu~\cite{hliu} studied the variation and $\lambda$-jump inequalities for the function family $\Phi\ast f $ when $\rho >2$, and obtained the following results:

\begin{thm} For any $\rho >2$, there exists a constant $C_\rho>0$ such that
$$\|\mathscr{V}_\rho(\Phi\ast f)\|_{L^p}\leq C_\rho\|f\|_{H^p},\;\;\frac{n}{n+1}<p\leq 1$$
for all $f\in H^p(\mathbb{R}^n)$.
\end{thm}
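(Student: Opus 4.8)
The plan is to prove the estimate first on $H^p$-atoms and then pass to general $f\in H^p$ by the atomic decomposition. Recall that for $\frac{n}{n+1}<p\le 1$ one has $\lfloor n(1/p-1)\rfloor =0$, so a $p$-atom $a$ associated with a ball $B=B(x_0,r)$ satisfies $\operatorname{supp} a\subseteq B$, $\|a\|_\infty\le |B|^{-1/p}$, and the single cancellation condition $\int a=0$. Since $\mathscr{V}_\rho$ is subadditive in $f$ and $p\le 1$, writing $f=\sum_k\lambda_k a_k$ with $\sum_k|\lambda_k|^p\le C\|f\|_{H^p}^p$ gives $\mathscr{V}_\rho(\Phi\ast f)(x)^p\le\sum_k|\lambda_k|^p\,\mathscr{V}_\rho(\Phi\ast a_k)(x)^p$, so it suffices to establish a bound $\int_{\mathbb{R}^n}\mathscr{V}_\rho(\Phi\ast a)(x)^p\,dx\le C$ that is uniform over all $p$-atoms $a$. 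I would then split this integral over $2B$ and over its complement $(2B)^c$.

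For the local piece I would use the $L^2$-boundedness of the variation operator, $\|\mathscr{V}_\rho(\Phi\ast g)\|_{L^2}\le C\|g\|_{L^2}$, which for $\rho>2$ is the available $L^2$ variational theory and is exactly what the hypothesis $\sum_{t>0}|\hat\phi_t(x)|^2<C_1$ is designed to furnish. Combining this with H\"older's inequality with exponents $2/p$ and $2/(2-p)$ on the ball $2B$ yields $\int_{2B}\mathscr{V}_\rho(\Phi\ast a)^p\le\bigl(\int_{\mathbb{R}^n}\mathscr{V}_\rho(\Phi\ast a)^2\bigr)^{p/2}|2B|^{(2-p)/2}\le C\|a\|_2^p\,|B|^{(2-p)/2}$, and since $\|a\|_2\le\|a\|_\infty|B|^{1/2}\le |B|^{1/2-1/p}$ the powers of $|B|$ cancel and the local piece is bounded by an absolute constant.

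For the tail I would exploit the cancellation of $a$. Using $\mathscr{V}_\rho\le\mathscr{V}_1$ for $\rho\ge1$ and controlling the total variation by the length of the path, I would write $\mathscr{V}_\rho(\Phi\ast a)(x)\le\int_0^\infty|\partial_t(\phi_t\ast a)(x)|\,dt$. The key identity is $\partial_t\phi_t=-t^{-1}\psi_t$ with $\psi(v)=n\phi(v)+v\cdot\nabla\phi(v)\in\mathscr{S}$ and $\int\psi=0$; subtracting $\psi_t(x-x_0)$ and using $\int a=0$ together with the mean value theorem and the Schwartz decay of $\nabla\psi$ gives, for $x\notin 2B$, the pointwise bound $\mathscr{V}_\rho(\Phi\ast a)(x)\le C\,r\,\|a\|_1\,|x-x_0|^{-n-1}\le C\,r^{\,1+n-n/p}|x-x_0|^{-n-1}$. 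Raising to the power $p$ and integrating over $|x-x_0|\ge2r$ produces a radial integral $\int_{2r}^\infty \sigma^{\,n-1-(n+1)p}\,d\sigma$, which converges precisely when $(n+1)p>n$, i.e. $p>\frac{n}{n+1}$; a direct computation then shows the remaining powers of $r$ cancel, so the tail is again bounded by a constant. This is where the restriction on $p$ enters in an essential way.

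The main obstacle I anticipate is the $L^2$ input for the local piece: for $\rho>2$ the $L^2$-boundedness of $\mathscr{V}_\rho(\Phi\ast\,\cdot\,)$ is not elementary and rests on variational estimates of Jones--Seeger--Wright type, translated through the square-function hypothesis $\sum_{t>0}|\hat\phi_t|^2<C_1$. Once that bound is in hand, the atomic reduction, the H\"older local estimate, and the cancellation-plus-decay tail estimate are essentially routine, and the sharp threshold $p>\frac{n}{n+1}$ emerges naturally from the convergence of the tail integral.
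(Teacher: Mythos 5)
You should note at the outset that the paper never proves this theorem: it is quoted as a known result of H.~Liu \cite{hliu}, and the paper's own proof concerns only the endpoint $\rho=2$ under an extra hypothesis on $\hat\phi_t$. So the only comparison available is with the paper's proof of that $\rho=2$ analogue, and your outline follows exactly the same template: reduce by atomic decomposition (one vanishing moment suffices precisely because $\tfrac{n}{n+1}<p\le 1$), estimate the local piece by an $L^2$ variational bound plus H\"older, and estimate the tail using $\int a=0$, the identity $\partial_t\phi_t=-t^{-1}\psi_t$, and Schwartz decay, with the restriction $p>\tfrac{n}{n+1}$ entering through the convergence of $\int_{2r}^{\infty}\sigma^{n-1-(n+1)p}\,d\sigma$. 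Your computations in the local and tail estimates are correct, and the paper's tail estimate (via the Minkowski inequality and the mean value theorem) is the same bound $C|y-\xi|\,|x-\xi|^{-n-1}$ that you derive; the cancellation of the powers of $|B|$ and of $r$ works out exactly as you claim.

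The one thing you must repair is your stated source for the $L^2$ input. The hypothesis $\sum_{t>0}|\hat\phi_t(x)|^2<C_1$ is \emph{not} part of the theorem you are proving; it is the additional assumption the paper introduces only for its own $\rho=2$ result, where it substitutes for L\'epingle-type variational estimates (which genuinely fail at $\rho=2$) via the elementary bound of the $2$-variation by the square function followed by Plancherel. For $\rho>2$, which is your setting, the estimate $\|\mathscr{V}_\rho(\Phi\ast g)\|_{L^2}\le C_\rho\|g\|_{L^2}$ holds unconditionally for $\phi\in\mathscr{S}$ with $\int\phi=1$, by the results of \cite{jsw} (long/dyadic variation via L\'epingle's inequality and martingale approximation, short variation via a square function). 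Your closing paragraph does correctly name \cite{jsw} as the real input, so the fix is only to delete the earlier sentence tying this input to the square-function hypothesis: as written, that sentence either assumes something the theorem does not grant, or, if the hypothesis were granted, would be proving the paper's different ($\rho=2$) theorem rather than this one.
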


\begin{thm}\label{sqrfthm} If $\rho >2$, then the $\lambda$-jump operator $N_{\lambda}(\Phi\ast f)$ satisfies
$$\|\lambda [N_{\lambda}(\Phi\ast f)]^{1/\rho}\|_{L^p}\leq C_\rho\|f\|_{H^p},\;\;\frac{n}{n+1}<p\leq 1,$$
uniformly in $\lambda >0$, for all $f\in H^p(\mathbb{R}^n)$.
\end{thm}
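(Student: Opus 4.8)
The plan is to deduce Theorem~\ref{sqrfthm} directly from the variation estimate of Theorem~1, by means of a pointwise comparison between the $\lambda$-jump function and the $\rho$-variation. The heart of the matter is the elementary observation that, for any family $F=\{F_t\}$ and any $x$,
$$\lambda\,[N_\lambda(F)(x)]^{1/\rho}\le \mathscr{V}_\rho(F)(x),$$
uniformly in $\lambda>0$. Once this is in hand, taking $L^p$ norms and invoking Theorem~1 with $F=\Phi\ast f$ gives
$$\|\lambda\,[N_\lambda(\Phi\ast f)]^{1/\rho}\|_{L^p}\le \|\mathscr{V}_\rho(\Phi\ast f)\|_{L^p}\le C_\rho\|f\|_{H^p}$$
for $\frac{n}{n+1}<p\le 1$, with a constant independent of $\lambda$, which is exactly the claim.

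To establish the pointwise comparison, I would fix $x$ and let $N=N_\lambda(F)(x)$. By definition there is an increasing sequence $0<s_1<t_1\le s_2<t_2<\cdots\le s_N<t_N$ with $|F_{t_k}(x)-F_{s_k}(x)|>\lambda$ for each $k$. Listing the distinct endpoints of these $N$ intervals in order, and noting that the intervals $[s_k,t_k]$ are pairwise non-overlapping (since $t_k\le s_{k+1}$), each pair $(s_k,t_k)$ is consecutive among these endpoints, so the difference $F_{t_k}(x)-F_{s_k}(x)$ occurs as one of the consecutive increments along this sequence. Hence, reading the sequence in decreasing order so that it is admissible in the definition of $\|\cdot\|_{v_\rho}$, and discarding the nonnegative contributions of the remaining increments, one obtains
$$\mathscr{V}_\rho(F)(x)\ge\Bigl(\sum_{k=1}^{N}|F_{t_k}(x)-F_{s_k}(x)|^{\rho}\Bigr)^{1/\rho}>(N\lambda^{\rho})^{1/\rho}=\lambda\,N^{1/\rho}.$$
Taking the supremum over all admissible jump configurations replaces $N$ by $N_\lambda(F)(x)$ and yields the asserted inequality.

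The proof is therefore essentially a soft reduction: all the analytic difficulty --- the atomic decomposition of $H^p$, the control of $\phi_t\ast a$ for an $H^p$-atom $a$ both near and away from its support, and the exploitation of the square-function hypothesis $\sum_{t>0}|\hat\phi_t(x)|^2<C_1$ --- is already packaged inside Theorem~1. The only genuine points to verify are bookkeeping ones, and these are where I would be most careful: that the non-overlap condition $t_k\le s_{k+1}$ really makes each jump appear as a single consecutive increment (so the lower bound neither double counts nor omits intervals, even in the boundary case $t_k=s_{k+1}$), and that the reversal of order needed to match the decreasing-sequence convention in the definition of $v_\rho$ leaves the variation unchanged. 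I do not expect a substantial obstacle here; the true analytic weight of the theorem lives in the variation inequality on which it rests.
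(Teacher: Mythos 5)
Your proposal is correct and is essentially the approach the paper takes: the paper states Theorem~\ref{sqrfthm} as a quoted result of Liu, and for its own $\rho=2$ analogue (Corollary~\ref{hliuconj}) it performs exactly your reduction --- the pointwise inequality $\lambda[N_\lambda(\Phi\ast f)]^{1/\rho}\le \mathscr{V}_\rho(\Phi\ast f)$, cited from \cite{jsw}, followed by the variation estimate. Your write-up is only more self-contained, in that you actually prove the pointwise jump--variation comparison (correctly, including the boundary case $t_k=s_{k+1}$) rather than citing it.
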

In this article we prove the above results for the case $\rho=2$ with an additional assumption on the Fourier transform of $\phi_t$.\\
%%%%%%%%%%%%%%%%%%%%%%%%%%%
We first need to present some known facts related to the atomic decomposition of $H^p(\mathbb{R}^n)$ that will be used when proving our results.
%%%%%%%%%%%%%%%%%%%%%%%%%%%%%%%
\begin{defn} Let $0<p\leq 1\leq q\leq \infty$, $p\neq q$. We say that a function $a\in L^q(\mathbb{R}^n)$ is a $(p,q)$-atom with the center at $c$, if the following conditions are satisfied:
\begin{enumerate} [(i)]
\item $\textrm{supp}\; a\subset B(c, r);$
\item $\|a\|_q\leq |B(c, r)|^{1/q-1/p};$
\item $\int a(x)\, dx=0.$
\end{enumerate}
\end{defn}
The following lemma and its proof can be found in R. H. Latter~\cite{rhl}.

\begin{lem} A distribution $f$ is in $H^p(\mathbb{R}^n)$, $0<p\leq 1$, if and only if there exists a sequence of $(p,q)$-atoms with $1\leq q\leq \infty$ and $q\neq p$, $\{a_j\}$, and a sequence of scalars $\{\lambda_j\}$ such that
$$f=\sum_{j=0}^\infty \lambda_ja_j$$
in the sense of distributions and 
$$C_1\|f\|_{H^p}^p\leq \sum_{j=0}^\infty \lambda_j^p\leq C_2\|f\|_{H^p}^p$$
where $C_1$ and $C_2$ are constants which depend only on $n$ and $p$.
\end{lem}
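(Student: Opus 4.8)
The plan is to prove the two implications separately. The equivalence bundles together an easy sufficiency direction and a substantially harder necessity direction, and the two inequalities $C_1\|f\|_{H^p}^p\le\sum_j\lambda_j^p$ and $\sum_j\lambda_j^p\le C_2\|f\|_{H^p}^p$ arise respectively from these two halves.

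First I would dispatch sufficiency, which yields the lower bound $C_1\|f\|_{H^p}^p\le\sum_j\lambda_j^p$ and simultaneously shows $f\in H^p$. The point is that a single $(p,q)$-atom has uniformly controlled Hardy-space quasi-norm. If $a$ is a $(p,q)$-atom centered at $c$ with supporting ball $B(c,r)$, then using the size bound $\|a\|_q\le|B(c,r)|^{1/q-1/p}$ one estimates $Ma(x)$ on $B(c,2r)$ by H\"older's inequality, while for $x$ outside $B(c,2r)$ the cancellation $\int a=0$ lets one replace $\phi_t(x-y)$ by $\phi_t(x-y)-\phi_t(x-c)$ and gain, via the Schwartz decay of $\phi$ and a first-order Taylor estimate, a factor comparable to $r|x-c|^{-(n+1)}$. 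Integrating the resulting pointwise bound gives $\|Ma\|_{L^p}\le C$ with $C$ independent of the atom, the far-field integral converging precisely because $p>n/(n+1)$. Since $Mf\le\sum_j|\lambda_j|\,Ma_j$ and $x\mapsto x^p$ is subadditive for $0<p\le1$, one gets $\|f\|_{H^p}^p=\|Mf\|_{L^p}^p\le\sum_j|\lambda_j|^p\|Ma_j\|_{L^p}^p\le C^p\sum_j|\lambda_j|^p$.

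The necessity direction is the heart of the matter and produces the upper bound $\sum_j\lambda_j^p\le C_2\|f\|_{H^p}^p$. Here I would first invoke the Fefferman--Stein maximal characterization of $H^p$, replacing $Mf$ by the grand maximal function $\mathcal{M}f$ with $\|\mathcal{M}f\|_{L^p}\approx\|f\|_{H^p}$, and then run the Calder\'on--Zygmund construction at every dyadic height. Setting $\Omega_k=\{x:\mathcal{M}f(x)>2^k\}$, I take a Whitney decomposition $\{Q_{k,j}\}$ of each open set $\Omega_k$ and fix a smooth partition of unity $\{\eta_{k,j}\}$ subordinate to a mild dilate of these cubes. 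Pairing $f$ against the partition and subtracting the appropriate mean-value correction---a constant, which suffices exactly in the range $p>n/(n+1)$ where the definition above asks only for $\int a=0$---produces functions $b_{k,j}$ supported in the dilated cubes, with mean zero and obeying $\|b_{k,j}\|_\infty\lesssim 2^k$. Putting $\lambda_{k,j}=C\,2^k|Q_{k,j}|^{1/p}$ and $a_{k,j}=\lambda_{k,j}^{-1}b_{k,j}$ exhibits each $a_{k,j}$ as a $(p,\infty)$-atom, and a telescoping argument across the levels $k$ organizes the pieces $b_{k,j}$ and their differences between consecutive levels into the decomposition $f=\sum_{k,j}\lambda_{k,j}a_{k,j}$ in $\mathscr{S}'$.

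The main obstacle is establishing the uniform pointwise bound $\|b_{k,j}\|_\infty\lesssim 2^k$, since $f$ is only a distribution and the sole quantitative information available is the $L^p$ size of $\mathcal{M}f$. This forces one to show that on each Whitney cube $Q_{k,j}$, whose diameter is comparable to its distance from the complement of $\Omega_k$, the smoothed values of $f$ are genuinely governed by the height $2^k$ that $\mathcal{M}f$ attains just outside $\Omega_k$; this local estimate, linking the distribution to its grand maximal function, is the delicate analytic core of the argument. Once it is in place the remaining bookkeeping is routine: the bounded-overlap property of the Whitney cubes together with the layer-cake formula gives $\sum_{k,j}\lambda_{k,j}^p\approx\sum_k2^{kp}\sum_j|Q_{k,j}|\approx\sum_k2^{kp}|\Omega_k|\lesssim\int(\mathcal{M}f)^p\approx\|f\|_{H^p}^p$, which is the desired upper bound, while convergence in $\mathscr{S}'$ follows from the atoms' size and cancellation together with $p>n/(n+1)$.
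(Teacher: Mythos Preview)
The paper does not actually prove this lemma: it states ``The following lemma and its proof can be found in R.~H.~Latter~\cite{rhl}'' and moves on. Your outline is a faithful sketch of exactly that standard argument---the easy direction via uniform $H^p$ bounds on atoms, and the hard direction via a Calder\'on--Zygmund decomposition of $f$ at dyadic heights of the grand maximal function, with Whitney cubes and a partition of unity producing $(p,\infty)$-atoms whose coefficients are controlled by the layer-cake formula. So your approach is the same as the one the paper defers to, only spelled out.

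One minor point worth flagging: you (correctly) note that the far-field integral in the sufficiency direction converges, and that a single mean-value correction suffices in the necessity direction, precisely when $p>n/(n+1)$. The lemma as stated in the paper claims the full range $0<p\le 1$, but with the atom definition given there (only $\int a=0$, no higher vanishing moments) this is only valid for $p>n/(n+1)$; for smaller $p$ one needs moment conditions up to order $\lfloor n(1/p-1)\rfloor$, as in Latter's original formulation. Since the paper only ever uses the range $n/(n+1)<p\le 1$, this discrepancy is harmless for its purposes, and your restriction to that range is the right one.
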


We will also need the following Lemma when proving our results.
\begin{lem}\label{varsqrbnd} Note that for $\rho \geq 2$ we have the following inequalities
$$\|a\|_{v_\rho}\leq \|a\|_{v_2}\leq 2 \cdot \mathcal{S}(a)$$
where
$$\mathcal{S}(a)=\left(\sum_{t\in\mathcal{J}}|a_t|^2\right)^{1/2}.$$
\end{lem}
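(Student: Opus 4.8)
The plan is to prove the two inequalities separately, in each case first fixing an arbitrary finite decreasing sequence $(t_i)$ in $\mathcal{J}$ and only afterwards passing to the supremum that defines the variation seminorm.

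For the bound $\|a\|_{v_\rho}\leq\|a\|_{v_2}$ I would rely solely on the elementary nesting of discrete $\ell^\rho$ quasi-norms: since $\rho\geq 2$, for any finite family of scalars $(d_i)$ one has $\left(\sum_i|d_i|^\rho\right)^{1/\rho}\leq\left(\sum_i|d_i|^2\right)^{1/2}$. Applying this with $d_i=a_{t_{i+1}}-a_{t_i}$ for a fixed decreasing sequence, and then taking the supremum over all such sequences on both sides, would immediately yield the claim.

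For the bound $\|a\|_{v_2}\leq 2\,\mathcal{S}(a)$ I would fix a finite decreasing sequence $t_1>t_2>\cdots>t_M$ and apply the triangle inequality in $\ell^2$ to split
\[
\left(\sum_i|a_{t_{i+1}}-a_{t_i}|^2\right)^{1/2}\leq\left(\sum_i|a_{t_{i+1}}|^2\right)^{1/2}+\left(\sum_i|a_{t_i}|^2\right)^{1/2}.
\]
Since the sequence is strictly decreasing, each of the two sums on the right runs over pairwise distinct indices, so each is a partial sum of $\sum_{t\in\mathcal{J}}|a_t|^2$ and is thus bounded by $\mathcal{S}(a)^2$; the right-hand side is therefore at most $2\,\mathcal{S}(a)$, and taking the supremum over all finite decreasing sequences would finish the proof.

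I do not expect any genuine obstacle here, as the whole argument is elementary. The only point I would be careful about is the distinctness of the indices in the two partial sums, since this is precisely what lets the triangle inequality be closed against a single copy of $\mathcal{S}(a)^2$ rather than forcing an overcount; everything else is just the monotonicity of $\ell^\rho$ norms in $\rho$ together with Minkowski's inequality.
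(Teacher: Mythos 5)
Your proof is correct and takes essentially the same route as the paper: both split each difference $a_{t_{i+1}}-a_{t_i}$ into its two constituents and use the distinctness of the indices in a decreasing sequence to bound each resulting partial sum by $\mathcal{S}(a)^2$. The only cosmetic difference is that the paper uses the pointwise inequality $|x-y|^2\leq 2|x|^2+2|y|^2$ and then takes a square root, whereas you invoke Minkowski's inequality in $\ell^2$; both yield the same constant $2$.
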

\begin{proof} First note that
$$\|a\|_{v_\rho}\leq \|a\|_{v_2}$$
since $\rho \geq 2$. Since in the definition of $\|a\|_{v_2}$ the supremum runs over all finite decrasing sequence $(t_j)\in \mathcal{J}$, and $|x-y|^2\leq 2|x|^2+2|y|^2$, $2\mathcal{S}(a)$ is an upper bound for the sequence 
$$\left\{\left(\sum_{i\geq 1} |a_{t_{i+1}}-a_{t_i}|^2\right)^{1/2}\right\},$$
thus the result follows.
\end{proof}

%%%%%%%%%%%%%%%%%%%%%%%
We can now state and prove our main result.
\begin{thm} Suppose that there exists a constant $C_1$ such that
$$\sum_{t>0} |\hat{\phi}_t(x)|^2<C_1$$
for all $x\in \mathbb{R}^n$. Then there exists a constant $C_2>0$ such that
$$\|\mathscr{V}_2(\Phi\ast f)\|_{L^p}\leq C_2\|f\|_{H^p},\;\;\frac{n}{n+1}<p\leq 1$$
for all $f\in H^p(\mathbb{R}^n)$.
\end{thm}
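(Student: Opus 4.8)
The plan is to combine the atomic decomposition of $H^p$ with the square-function domination furnished by Lemma \ref{varsqrbnd}, thereby reducing the whole inequality to a single uniform estimate on an atom. First I would invoke the atomic decomposition lemma to write $f=\sum_j \lambda_j a_j$ with $(p,2)$-atoms $a_j$ and $\sum_j \lambda_j^p\le C\|f\|_{H^p}^p$. Since the $v_2$-norm satisfies the triangle inequality, the operator $\mathscr{V}_2$ is subadditive, and because $0<p\le 1$ the elementary inequality $(\sum b_j)^p\le \sum b_j^p$ applies, giving
$$\|\mathscr{V}_2(\Phi\ast f)\|_{L^p}^p\le \sum_j \lambda_j^p\,\|\mathscr{V}_2(\Phi\ast a_j)\|_{L^p}^p.$$
It therefore suffices to prove a bound $\|\mathscr{V}_2(\Phi\ast a)\|_{L^p}^p\le C$ that is uniform over all $(p,2)$-atoms $a$, and by the translation invariance of convolution I may assume $a$ is supported in a ball $B=B(0,r)$ centered at the origin.

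For this atomic estimate I would split $\mathbb{R}^n$ into the near region $\{|x|\le 2r\}$ and the far region $\{|x|>2r\}$, and on both I apply Lemma \ref{varsqrbnd} to replace $\mathscr{V}_2(\Phi\ast a)$ by $2\,\mathcal{S}(\Phi\ast a)$. The near region is dispatched by H\"older's inequality, using $p<2$:
$$\int_{|x|\le 2r}\mathcal{S}(\Phi\ast a)^p\,dx\le |B(0,2r)|^{1-p/2}\left(\int \mathcal{S}(\Phi\ast a)^2\,dx\right)^{p/2}.$$
This is precisely where the Fourier hypothesis is used: by Plancherel and $\sum_{t>0}|\hat{\phi}_t(\xi)|^2<C_1$,
$$\int \mathcal{S}(\Phi\ast a)^2\,dx=\int\Big(\sum_{t>0}|\hat{\phi}_t(\xi)|^2\Big)|\hat{a}(\xi)|^2\,d\xi\le C_1\|a\|_2^2,$$
and the size condition $\|a\|_2\le |B|^{1/2-1/p}$ makes the product of the two factors a constant independent of $r$. (It is exactly this $L^2$ bound on the square function that the extra Fourier assumption buys us at the endpoint $\rho=2$.)

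The far region is the crux. There I would exploit the cancellation $\int a=0$ to write $\phi_t\ast a(x)=\int[\phi_t(x-y)-\phi_t(x)]\,a(y)\,dy$, and then use the mean value theorem together with the Schwartz decay of $\nabla\phi$ to obtain, for $|x|>2r$ and $y\in B$, a pointwise estimate of the form $|\phi_t\ast a(x)|\le C\,r\,\|a\|_1\,t^{-n-1}(1+|x|/t)^{-N}$ with $N$ as large as needed. Aggregating the square of this bound over the scales $t$ yields the Calder\'on--Zygmund-type decay $\mathcal{S}(\Phi\ast a)(x)\le C\,r\,\|a\|_1\,|x|^{-n-1}$; inserting $\|a\|_1\le |B|^{1-1/p}$ and integrating $|x|^{-(n+1)p}$ over $\{|x|>2r\}$ produces a constant precisely because $(n+1)p>n$, which is exactly the restriction $\frac{n}{n+1}<p\le 1$. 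I expect the main obstacle to lie here: obtaining clean control of the aggregate over all scales $t$ of the pointwise estimates and verifying that the powers of $r$ cancel, since this is where the atom's normalization, the vanishing-moment condition, and the admissible range of $p$ must all be tracked simultaneously.
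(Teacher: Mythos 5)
Your reduction to a uniform atom estimate, your near-region argument, and your use of the Fourier hypothesis are exactly the paper's proof: the paper likewise proves $\|\mathscr{V}_2(\Phi\ast f)\|_{L^2}^2\le 4C_1\|f\|_{L^2}^2$ from Lemma~\ref{varsqrbnd} and Plancherel's theorem, and then applies H\"older's inequality on $4Q$ together with the size condition $\|a\|_2\le |Q|^{1/2-1/p}$; the summation over atoms, which you spell out, is left implicit there.

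The far region is where your proposal breaks, and it is exactly the spot you flagged as the likely obstacle. You propose to dominate $\mathscr{V}_2(\Phi\ast a)(x)$ by $2\mathcal{S}(\Phi\ast a)(x)$ via Lemma~\ref{varsqrbnd} and then to bound $\mathcal{S}(\Phi\ast a)(x)=\left(\sum_{t>0}|\phi_t\ast a(x)|^2\right)^{1/2}$ by summing the squares of the pointwise bounds $|\phi_t\ast a(x)|\le C\,r\,\|a\|_1\,t^{-n-1}(1+|x|/t)^{-N}$. But the family $\Phi\ast a$ is indexed by the full continuum $t\in(0,\infty)$, and your upper bound is strictly positive for \emph{every} $t$; a sum of strictly positive terms over an uncountable index set is infinite, so no choice of $N$ can yield $\mathcal{S}(\Phi\ast a)(x)\le C\,r\,\|a\|_1\,|x|^{-n-1}$ by this aggregation. (Your computation would be fine for a countable, say dyadic, set of scales, but the theorem concerns all $t>0$.) Moreover, unlike the near region, there is no hypothesis to fall back on here: the Fourier assumption gives global $L^2$ control of $\mathcal{S}$, not pointwise far-field decay. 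The paper's proof avoids $\mathcal{S}$ off $4Q$ entirely: it bounds the $2$-variation of the kernel differences by their total variation in $t$,
$$\left\|\{\phi_t(x-y)-\phi_t(x-\xi)\}_{t>0}\right\|_{v_2}\le\int_0^\infty\left|\frac{\partial}{\partial t}\left(\phi_t(x-y)-\phi_t(x-\xi)\right)\right|\,dt\le C\,\frac{|y-\xi|}{|x-\xi|^{n+1}},$$
which is an integral, not a sum, over the scales, and then uses Minkowski's inequality and the cancellation $\int a=0$ through
$$\mathscr{V}_2(\Phi\ast a)(x)\le\int_Q|a(y)|\,\left\|\{\phi_t(x-y)-\phi_t(x-c)\}_{t>0}\right\|_{v_2}\,dy.$$
If you replace your square-function aggregation by this derivative (total-variation-in-$t$) bound, the rest of your far-region bookkeeping --- the powers of $r$ cancelling and the integrability of $|x|^{-(n+1)p}$ outside $4Q$ precisely because $(n+1)p>n$ --- goes through verbatim and closes the proof.
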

\begin{proof} First we have
\begin{align*} 
\|\mathscr{V}_2(\Phi\ast f)\|_{L^2}^2&\leq 4\int \sum_{t>0}|\phi_t\ast f|^2\, dx \;\; \;\;\;\textrm{(by Lemma~\ref{varsqrbnd})}\\
&= 4\int \sum_{t>0} |\widehat{\phi_t\ast f}(x)|^2\, dx \;\; \;\;\;\textrm{(by Plancherel's theorem)}\\
&= 4\int \sum_{t>0} |\widehat{\phi_t}(x)\hat{f}(x)|^2\, dx\\
&=4\int \sum_{t>0}|\widehat{\phi_t}(x)|^2|\hat{f}(x)|^2\, dx\\
&\leq 4 C_1\int |\hat{f}(x)|^2\, dx  \;\; \;\;\;\textrm{(by the hypothesis)}\\
&= 4 C_1\int |f(x)|^2\, dx \;\; \;\;\;\textrm{(by Plancherel's theorem)}\\
&=4C_1 \|f\|_{L^2}^2.
\end{align*}
Let now  $a$ be a $(p,2)$ atom and $\frac{n}{n+1}<p\leq 1$. Suppose that $a$ is supported in a cube $Q$, $c$ is the center of $Q$.  By H\"older's inequality and our previous observation we have
\begin{align*}
\int_{4Q}\mathscr{V}_2(\Phi\ast a)^p(x)\, dx&\leq |4Q|^{1-p/2}\|\mathscr{V}_2(f)\|_{L^2}^p\\
&\leq 2\sqrt{C_1} |4Q|^{1-p/2}\|a\|_{L^2}^p\\
&\leq 2\sqrt{C_1}.
\end{align*}
By the Minkowski inequality and the mean value theorem, we see that for $y, \xi \in Q$ and $x\in \mathbb{R}^n\backslash 4Q$,
\begin{align*}
\|\{\phi_t(x-y)-\phi_t(x-\xi )\}_{t>0}\|_{\mathscr{V}_2}&\leq \int_0^\infty\left|\frac{\partial}{\partial t}\left(\phi_t(x-y)-\phi_t(x-\xi)\right)\right|\, dt\\
&\leq C_2|y-\xi |\int_0^\infty\frac{1}{t^{n+2}}\left(1+\frac{|x-\xi |}{t}\right)^{-n-2}\, dt\\
&\leq C_2\frac{|y-\xi |}{|x-\xi |^{n+1}}\int_0^\infty\frac{t^n}{(1+t)^{n+2}}\, dt\\
&\leq C_2\frac{|y-\xi |}{|x-\xi |^{n+1}}.
\end{align*}
Since we also have
$$\mathscr{V}_2(\Phi\ast a)(x)\leq \int_Q|a(y)|\|\{\phi_t(x-y)-\phi_t(x-c )\}_{t>0}\|_{\mathscr{V}_2}\, dy,$$
we obtain
\begin{align*}
\int_{\mathbb{R}^n\backslash 4Q}\mathscr{V}_2(\Phi\ast a)^p(x)\, dx&\leq C_2\int_{|x-c|\geq 4 |Q|}\frac{|Q|^p}{|x-c|^{(n+1)p}}\, dx\left(\int_Q|a(y)|\, dy\right)^p\\
&\leq C_2,
\end{align*}
and this completes our proof.
\end{proof}
\begin{cor}\label{hliuconj} Suppose that there exists a constant $C_1$ such that
$$\sum_{t>0} |\hat{\phi}_t(x)|^2<C_1$$
for all $x\in \mathbb{R}^n$.  Then the $\lambda$-jump operator $N_{\lambda}(\Phi\ast f)$ satisfies
$$\|\lambda [N_{\lambda}(\Phi\ast f)]^{1/2}\|_{L^p}\leq C_2\|f\|_{H^p},\;\;\frac{n}{n+1}<p\leq 1,$$
uniformly in $\lambda >0$, for all $f\in H^p(\mathbb{R}^n)$.
\end{cor}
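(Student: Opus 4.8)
The plan is to deduce this from the preceding Theorem via the elementary pointwise domination of the $\lambda$-jump operator by the $2$-variation. The hard analytic work—the $H^p\to L^p$ bound—has already been carried out for $\mathscr{V}_2(\Phi\ast f)$, so all that remains is a combinatorial comparison that costs nothing in the $H^p$ estimate and, in particular, introduces no dependence on $\lambda$.

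First I would fix $x\in\mathbb{R}^n$ and $\lambda>0$, abbreviate $F_t=\phi_t\ast f(x)$, and set $N=N_{\lambda}(\Phi\ast f)(x)$. By the definition of the $\lambda$-jump function there is an increasing sequence $0<s_1<t_1\leq s_2<t_2<\cdots\leq s_N<t_N$ with $|F_{t_k}-F_{s_k}|>\lambda$ for each $k=1,\dots,N$. Reversing this sequence yields a finite decreasing sequence admissible in the definition of $\|\cdot\|_{v_2}$, and because each pair $(s_k,t_k)$ is adjacent in that sequence (this is precisely what the requirement $t_k\leq s_{k+1}$ guarantees), all $N$ differences $F_{t_k}-F_{s_k}$ occur among its consecutive differences. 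Retaining only those terms in the defining sum for the $2$-variation therefore gives
$$\mathscr{V}_2(\Phi\ast f)(x)^2\geq \sum_{k=1}^N |F_{t_k}-F_{s_k}|^2>N\lambda^2=\lambda^2\,N_{\lambda}(\Phi\ast f)(x),$$
and taking square roots produces the pointwise inequality
$$\lambda\,[N_{\lambda}(\Phi\ast f)(x)]^{1/2}\leq \mathscr{V}_2(\Phi\ast f)(x),$$
whose right-hand side is independent of $\lambda$.

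Finally I would raise both sides to the $p$-th power, integrate over $\mathbb{R}^n$, and invoke the Theorem above to obtain
$$\|\lambda\,[N_{\lambda}(\Phi\ast f)]^{1/2}\|_{L^p}\leq \|\mathscr{V}_2(\Phi\ast f)\|_{L^p}\leq C_2\|f\|_{H^p},\;\;\frac{n}{n+1}<p\leq 1.$$
Since the pointwise bound holds simultaneously for every $\lambda>0$ against the same majorant $\mathscr{V}_2(\Phi\ast f)$, the resulting estimate is automatically uniform in $\lambda$. I do not expect any genuine obstacle here: the only point requiring care is the adjacency claim in the variation sum—that each admissible pair $(s_k,t_k)$ contributes a true consecutive difference—which is exactly why the defining sequence of the $\lambda$-jump operator is taken to satisfy $t_k\leq s_{k+1}$.
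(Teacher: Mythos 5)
Your proof is correct and follows essentially the same route as the paper: the pointwise domination $\lambda\,[N_{\lambda}(\Phi\ast f)]^{1/2}\leq \mathscr{V}_2(\Phi\ast f)$, followed by the $H^p\to L^p$ bound for $\mathscr{V}_2(\Phi\ast f)$ from the preceding theorem. The only difference is that you supply the short adjacency argument proving the pointwise inequality, whereas the paper simply cites it from Jones--Seeger--Wright and notes it is easy to see.
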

\begin{proof} It has been proven in \cite{jsw} and easy to see that
$$\lambda [N_{\lambda}(\Phi\ast f)]^{1/\rho}\leq \mathscr{V}_\rho(\Phi\ast f)$$
holds for any $\rho$ uniformly in $\lambda$. Thus the proof follows from Theorem~\ref{sqrfthm}.
\end{proof}
\begin{rem}Note that Corollary~\ref{hliuconj} answers  a conjecture of H.~Liu ~\cite{hliu} affirmatively.
\end{rem}
 %%%%%%%%%%%%%%%%%%%%%%%%%%%%%%%%%%%%%%%%%%%%%%%%

\end{document}